\theoremstyle{plain}
\newtheorem{thm}{Theorem}
\newtheorem{claim}{Claim}
\theoremstyle{definition}
\newtheorem{asmptn}{Assumption}
\newtheorem{remark}{Remark}
\newtheorem{example}{Example}
\renewcommand{\epsilon}{\varepsilon}
\begin{document}

\title{\LARGE \bf
Value of forecasts in planning under uncertainty:\\
Extended version
}

\author{Konstantinos Gatsis, Ufuk Topcu, and George J. Pappas
\thanks{This is an extended version of the paper to appear at the 2015 American Control Conference, containing the proofs.}%
\thanks{
This work was supported in part by NSF CNS award number 1312390, and by TerraSwarm, one of six centers of STARnet, a Semiconductor Research Corporation program sponsored by MARCO and DARPA. The authors are with the Department of Electrical and Systems Engineering, University of Pennsylvania, 200 South 33rd Street, Philadelphia, PA 19104. Email: \{kgatsis, utopcu, pappasg\}@seas.upenn.edu. }}

\maketitle

\thispagestyle{empty}
\pagestyle{empty}

\begin{abstract}
In environments with increasing uncertainty, such as smart grid applications based on renewable energy, planning can benefit from incorporating forecasts about the uncertainty and from systematically evaluating the utility of the forecast information. We consider these issues in a planning framework in which forecasts are interpreted as constraints on the possible probability distributions that the uncertain quantity of interest may have. The planning goal is to robustly maximize the expected value of a given utility function, integrated with respect to the worst-case distribution consistent with the forecasts. Under mild technical assumptions we show that the problem can be reformulated into convex optimization. We exploit this reformulation to evaluate how informative the forecasts are in determining the optimal planning decision, as well as to guide how forecasts can be appropriately refined to obtain higher utility values. A numerical example of wind energy trading in electricity markets illustrates our results.
\end{abstract}

\section{Introduction}

The intrinsically uncertain nature of renewable energy sources, for example their dependence on local weather conditions~\cite{mills2011}, impedes the reliable operation of the electricity grid~\cite{makarov2009}. Forecasts about the renewable energy generation provide a means to cope with this uncertainty. Hence the successful incorporation of forecasts into planning grid operation emerges as an important challenge, as well as the problem of obtaining forecasts that provide the most valuable information for planning.



Uncertainty is usually considered in a stochastic framework during the grid planning stage. Generation, for example, is modeled as a random variable, and planning decisions are made a certain length of time before the generation is realized at operation time. Planning in this context can provide probabilistic guarantees, for example, that undesirable events will happen with low probability, or that expected operation costs/utilities are optimized. Examples can be found in, e.g., dispatch problems~\cite{risk_limiting}, unit commitment~\cite{stochastic_UC}, and participation in electricity markets~\cite{BitarEtal, Pinson_Trading}. 

Solving such stochastic optimization problems relies on the availability of the underlying probability distribution of the uncertain quantities. The renewable generation forecasts however do not provide complete descriptions of the underlying uncertainty, in the sense that they do not completely describe the probability distribution of the generation. Certain questions arise in this context:
\begin{enumerate}
\item How can forecasts of the quantities of interest be incorporated in a stochastic planning framework?
\item How informative is a set of given forecasts to a specific planning problem?
\end{enumerate}
The first question is crucial to reliably integrate renewable generation in the grid. The second question becomes practically relevant if obtaining forecasts during the planning stage incurs significant cost. This could be, for example, the computational cost of running complex renewable generation forecasting algorithms, typically combining numerical weather prediction models, historical data, local measurements,  and simulations
~\cite{wind_forecast}. 
Alternatively these costs could be monetary, as is the case when grid operators and power producers do not create their own forecasts but instead purchase them in the form of products from companies specializing in forecasting~\cite{NCAR, Forecast_industries}. %
Understanding how valuable are the forecasts in determining planning decisions could help reduce the associated costs of obtaining such forecasts.

In this paper we take a step towards mathematically formalizing and answering the above questions. The simplest type of forecast for an uncertain quantity that is modeled as random is a point forecast, i.e., a single value thought as the most likely or the expected value of the quantity. However, more sophisticated types of forecasts, such as probabilistic forecasts, have been introduced for the case of renewable generation~\cite{wind_forecast, Pinson_Trading}. Unlike point forecasts, probabilistic forecasts provide higher fidelity information about the probability distribution of the random quantity. Prediction intervals, i.e., bounds on the probability that the quantity takes values in certain intervals, are a common example~\cite{wind_forecast}.

We consider a general stochastic planning framework under probabilistic forecasts (Section~\ref{sec:problem}). We look for a planning decision that maximizes the expected value of a given utility function, but the probability distribution of the uncertain quantity, with respect to which the expectation is computed, is unknown and only described via the forecasts. We interpret the probabilistic forecasts as constraints defining a set of possible probability distributions for which the decision needs to account. Consequently a robust (max-min) planning problem is formulated to determine the decision that maximizes the worst-case expected utility, where the worst-case expectation is selected from the set of probability distributions consistent with the forecasts.

In Section~\ref{sec:solution} we utilize Lagrange duality theory~\cite{LP_over_distributions} to show that the robust planning problem admits an equivalent reformulation to a single maximization problem with the introduction of auxiliary (dual) variables. This resulting problem is convex under certain relatively mild assumptions on the planning utility function, but it includes an infinite number of constraints. For the special case of forecasts in the form of prediction intervals we show that the constraints can be reduced to a finite number (Section~\ref{sec:planning_prediction_intervals}), resulting in a standard (finite-dimensional) convex optimization problem that can be solved readily~\cite{Boyd_convex}.

The problem of selecting the worst-case probability distribution given probabilistic descriptions has been discussed previously in the context of optimal uncertainty quantification~\cite{Han_uncertainty_quantification}. The difference in our formulation however is that an additional planning optimization level is considered. Related planning formulations under uncertainty about probability distributions have been considered in, e.g.,~\cite{Distributionally_robust, Ambiguous_chance_constrained} and references therein, where the optimal planning is solved in a data-driven fashion based on samples from the unknown underlying distribution. Conceptually similar approaches have been considered in the context of power systems, where expected values in stochastic optimization are approximated using samples obtained from wind power forecasting algorithms~\cite{stochastic_UC, probabilistic_forecast_application}. In contrast, planning in our work is decoupled from the data collection and does not rely on sample-based approximations, but instead utilizes the output of the forecasting procedures, i.e., the probabilistic forecasts themselves. 

A further novelty of our approach is that it allows to characterize how valuable is the forecast information in determining the optimal planning decision. This characterization follows from a sensitivity analysis of the optimal planning objective value with respect to the forecast constraints (Section~\ref{sec:forecast_updates}). 
Moreover, under the hypothesis that forecasts can be refined by, e.g., a forecasting oracle, we develop a sensitivity-driven procedure that sequentially selects forecast refinements to yield a higher robust planning utility value. Numerical examples of the proposed approach for trading wind energy in electricity markets~\cite{BitarEtal, Pinson_Trading} are presented in Section~\ref{sec:numerical}. Finally, we conclude with a discussion on our contributions and future work in Section~\ref{sec:remarks}.


\section{Problem description}\label{sec:problem}

We consider a decision-making framework under uncertainty. The unknown quantity of interest, for example the unknown renewable power generation, is denoted by $x$ and we assume that $x$ takes real values in a subset $\mathcal{X} \subseteq \mathbb{R}$ of the reals. We model $x$ as a random variable following some probability distribution on $\mathcal{X}$. As we will make clear, this distribution is not known completely. Hence there is uncertainty about the distribution of the random quantity of interest.

Suppose a decision-maker needs to select the value of a controlled parameter $b$ before the random quantity $x$ is realized, i.e., drawn from its probability distribution. We assume $b$ takes values in a convex subset $\mathcal{B} \subseteq \mathbb{R}$ of the reals. To rank the different choices for $b$ we assume that a function $J(x,b)$ models the utility to the user if decision $b$ is taken a priori and the unknown quantity takes value $x$. Technically we assume the utility function is continuous in both variables $x$ and $b$, and also concave in $b$ for every value of $x \in \mathcal{X}$, and concave in $x$ for every value of $b \in \mathcal{B}$.

\begin{example}\label{example:bidding}
Consider the case of a wind power plant participating in a day-ahead electricity market~\cite{BitarEtal, Pinson_Trading}. The producer provides a capacity bid in the market, which we can denote by a decision variable $b$, representing the estimated renewable power generation that will be supplied to the grid at a future time interval. We let $b$ take values in the unit interval, $b \in [0,1]$, which can be interpreted as a normalized percentage with respect to the maximum capacity of the generator. If we denote by $x$ the actual realized power, also normalized in $x \in [0,1]$, a simple model~\cite{BitarEtal} of the producer's monetary profit from a realization $x$ after bidding $b$ is
\begin{equation}\label{eq:J_bidding}
	J(x,b) = p\,b - q [b-x]_+ ,
\end{equation}
where $[\theta]_+ = \max\{\theta,0\}$. The constant $p>0$ rewards high bids, while the constant $q>0$ penalizes a shortfall of generation compared to the bid, i.e., when $b>x$. It is assumed that $q>p$, implying that the profit decreases for higher shortfalls. The utility function, as can be seen in Fig. \ref{fig:concavity}, satisfies the convexity assumptions of the general problem formulation.
\end{example}

\begin{figure}[!t]
  \centering
  \includegraphics[width=1.0\columnwidth]{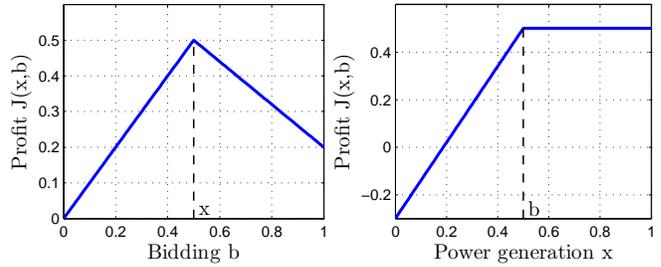}
  \caption{Profit $J(x,b)$ from bidding in electricity markets for $p=1$, $q=1.6$. On the left generation is kept constant $x = 0.5$ while bid $b$ varies. On the right bid is kept constant $b = 0.5$ while generation $x$ varies.}
  \label{fig:concavity}
\end{figure}

A common assumption in planning problems under uncertainty is that the underlying probability distribution of the unknown quantity is known. In particular, if the random variable $x$ has a probability distribution $F$, the expected utility to the user from a choice $b$ is given by
\begin{equation}\label{eq:expected_profit}
	\mathbb{E}_{x \sim F} \, J(x,b) = \int_x \, J(x,b) \, dF(x),
\end{equation}
where the integral is over the range of values $x \in \mathcal{X}$. The value $b$ that maximizes the expected utility in \ref{eq:expected_profit} becomes the optimal decision in this case. 

In this paper however the underlying distribution $F$ is not completely known but only forecasts about the random quantity $x$ are available. We consider probabilistic forecasts~\cite{wind_forecast, Pinson_Trading}, which provide information about the underlying probability distribution $F$ of $x$. More specifically we consider given (measurable) functions $g_i(x)$, for $i=1,\ldots,n$, of the random quantity $x$, and forecasts stating that the expected value of these functions is upper bounded by some parameters $\epsilon_i$, i.e.,
\begin{equation}\label{eq:forecast_inequality}
	\mathbb{E}_{x \sim F} \, g_i(x) \leq \, \epsilon_i, \; i=1,\ldots,n .
\end{equation}
Both the functions $g_i(x)$ and the parameters $\epsilon_i$ in these inequalities are given to the decision-maker, e.g., provided by some forecasting algorithm. We interpret the forecasts \ref{eq:forecast_inequality} as a set of constraints on the unknown underlying distribution $F$, narrowing the uncertainty of the decision-maker regarding the distribution of $x$. Based on this interpretation, we will interchangeably use the terms forecasts and constraints when referring to \ref{eq:forecast_inequality}. 

The constraint-based characterization of the uncertainty in \ref{eq:forecast_inequality} matches many common types of forecasts. For example bounds on mean value, second moment, or higher-order moments can be expressed in the form of \ref{eq:forecast_inequality} by appropriately selecting the function $g_i(x)$. In the following example we detail another special case, the prediction intervals. We will revisit this case later.

\begin{example}\label{example:prediction_intervals}
An example motivated by recent forecast methods for renewable generation~\cite{Pinson_Trading} are prediction intervals. Suppose the renewable generation $x$ takes values in a normalized interval $\mathcal{X} = [0,1]$, partitioned into $m$ sub-intervals $[x_{i-1}, x_i)$ for $i=1,\ldots,m$, where $x_0 = 0$, $x_m = 1$, and the forecasts take the form
\begin{equation}\label{eq:prediction_intervals}
	\underline{\delta}_i  \leq \mathbb{P}(x_{i-1} \leq x < x_{i} ) \leq \overline{\delta}_i, \quad i=1, \ldots, m.
\end{equation}
In other words, this forecast provides upper and lower bounds on the probability that $x$ takes value in each of the intervals. These forecasts can be reformulated into the form of \ref{eq:forecast_inequality} by defining the indicator functions 
\begin{equation}
	g_i (x) = \mathds{1}\{[ x_{i-1}, x_{i}) \} = \left\{ 
	\begin{array}{ll}
	1 &\text{if } x_{i-1} \leq x < x_{i}, \\
	0 &\text{otherwise},
	\end{array}
	\right.
\end{equation}
for $i = 1,\ldots,m$. Then the constraints \ref{eq:prediction_intervals} are equivalently written in the form of \ref{eq:forecast_inequality} as
\begin{equation}\label{eq:standard_prediction_intervals}
	\mathbb{E}_{x \sim F} \, g_i(x) \leq \, \overline{\delta}_i , \; \text{and} \quad 
	\mathbb{E}_{x \sim F} \, - g_i(x) \leq \, - \underline{\delta}_i.
\end{equation}
A pictorial representation is given in Fig.~\ref{fig:prediction_intervals}.
\end{example}

\begin{figure}[!t]
  \centering
  \includegraphics[width=1.00\columnwidth]{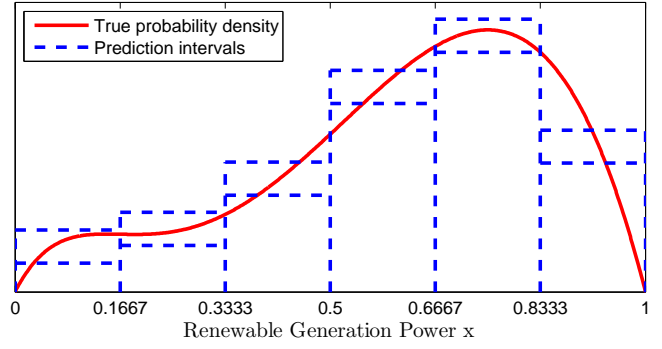}
  \caption{Pictorial representation of prediction intervals (Example~\ref{example:prediction_intervals}). The true but unknown probability density function of the renewable generation is plotted. Forecasts of the generation are available in the form of prediction intervals, which are upper and lower bounds on the real probability mass placed on $m = 6$ equal intervals of generation amounts. For illustration purposes the prediction intervals are shown by blocks, with the areas under the blocks equal to the upper and lower bounds, $\overline{\delta}_i$ and $\underline{\delta}_i$ respectively, according to \ref{eq:prediction_intervals}.}
  \label{fig:prediction_intervals}
\end{figure}

We make the following technical assumption about the forecasts provided in \ref{eq:forecast_inequality}.

\begin{asmptn}\label{as:strict_inequality}
There exists a probability distribution $F$ on $\mathcal{X}$ such that the forecasts \ref{eq:forecast_inequality} are satisfied with strict inequality, i.e., $\mathbb{E}_{x \sim F} \, g_i(x) < \epsilon_i$ for all $i=1,\ldots,n$.
\end{asmptn}

This assumption states that the constraints \ref{eq:forecast_inequality} are feasible, and furthermore that strict feasibility holds. To motivate the feasibility, we can think of the actual underlying distribution of the random quantity, which is not known to the decision-maker, as satisfying the constraints \ref{eq:forecast_inequality}. The strict feasibility is assumed for certain technical reasons in order to establish the results in the sequel of this paper, but is not practically restrictive. The parameters $\epsilon_i$ can always be perturbed to make the strict inequality assumption hold.

Given the constraint-based interpretation of the forecasts in \ref{eq:forecast_inequality}, we can alternatively describe the planner's uncertainty concerning the true distribution $F$ of the unknown quantity $x$ by a set $\mathcal{F}$ containing the (possibly uncountably many) distributions that satisfy the forecasts \ref{eq:forecast_inequality}. The set of all distributions consistent with forecasts \ref{eq:forecast_inequality} is
\begin{equation}\label{eq:set_F_defn}
	\mathcal{F}(\epsilon) = \{ F \in \mathcal{D}_{\mathcal{X}} \,: \mathbb{E}_{x \sim F} \, g_i(x) \leq \, \epsilon_i, \; i=1, \ldots, n \},
\end{equation}
where we denote the set of all probability distributions on $\mathcal{X}$ by $\mathcal{D}_{\mathcal{X}}$. Here we parametrize the set with the forecast bounds $\epsilon_i$, $i=1,\ldots,n$, grouped in a vector $\epsilon$. For the current problem development, $\epsilon$ is fixed, but we introduce this parametrization for later use in Section~\ref{sec:forecast_updates}. By Assumption~\ref{as:strict_inequality} the set $\mathcal{F}(\epsilon)$ has a nontrivial interior, e.g., the true underlying distribution belongs in the set.

Having defined the utility $J(x,b)$ of different choices $b$ to the decision-maker, as well as the possible distributions $F \in \mathcal{F}(\varepsilon)$ of the random quantity $x$ that the decision-maker can anticipate according to the probabilistic forecasts, we now pose a planning problem. The decision-maker needs to determine the value $b$ that robustly maximizes the worst-case expected utility anticipated from the forecasts, mathematically captured as a robust optimization problem
\begin{equation}\label{eq:robust_planning}
	P^*(\epsilon) = \, \underset{b \in \mathcal{B}}{\text{maximize}} \; \inf_{F \in \mathcal{F}(\epsilon)} \; \mathbb{E}_{x \sim F} \, J(x,b). 
\end{equation}
In other words, the planner looks for a decision that leads to the most favorable objective value assuming the worst-case distribution among the ones consistent with the forecast $\mathcal{F}(\epsilon)$. We denote this robustly optimal objective value by $P^*(\epsilon)$, again parametrized by the forecast bounds $\epsilon$ for later use. We also denote the optimal planning decision as $b^*(\epsilon)$, assuming it exists.

The difficulty in solving the robust planning problem \ref{eq:robust_planning} lies in the max-min formulation. For every choice $b$, one needs to solve a minimization problem with respect to the distribution $F$ to evaluate how good the choice is, and then infer how to improve on $b$ to maximize the objective function. To overcome this complexity, in the following section, we examine how the robust planning problem \ref{eq:robust_planning} can be equivalently written as a single-layer optimization problem. For the special case of forecasts given by prediction intervals (Example \ref{example:prediction_intervals}), we obtain an equivalent convex optimization problem from which $b^*(\epsilon)$ can be determined efficiently. 

We proceed in Section~\ref{sec:forecast_updates} to characterize how informative the $n$ forecasts given by \ref{eq:forecast_inequality} are in determining the optimal planning decisions. To this end, we examine how sensitive the robust planning objective value $P^*(\epsilon)$ is to changes in the forecast parameters $\epsilon$. Based on this analysis, we also develop a methodology for improving the planning utility by appropriately refining forecasts.


\section{Robustly optimal planning}\label{sec:solution}

The robust planning problem described in \ref{eq:robust_planning} involves a max-min structure that makes it hard to determine the robustly optimal decision $b^*(\epsilon)$. In this section we follow an alternative path based on Lagrange duality theory~\cite{LP_over_distributions}. Under certain technical conditions the inner minimization in \ref{eq:robust_planning} over probability distributions $F$ of the unknown quantity $x$ is equivalent to its Lagrange dual (maximization) problem. Replacing then the inner minimization in \ref{eq:robust_planning} with the equivalent maximization yields a maximization problem (with a single optimization layer) over the planning decision variable $b$ and additional (dual) variables. We state this result in the following theorem. Its proof relies on the results of~\cite{LP_over_distributions}.

\begin{thm}\label{thm:dual}
If Assumption \ref{as:strict_inequality} holds, the robust planning problem defined in \ref{eq:robust_planning} is equivalent to the following optimization problem
\begin{alignat}{2}
	& \underset{ b, \lambda , \eta }{\text{maximize}} \quad && - \sum_{i=1}^n \lambda_i \, \epsilon_i - \eta  \label{eq:semi_infinite_program}\\
	& \text{subject to} \quad  
	&& J(x, b) + \sum_{i=1}^n \lambda_i \, g_i(x) +  \eta \geq 0, \quad \forall x \in \mathcal{X}, \label{eq:constraints_forall_x}\\
	& && b \in \mathcal{B} , \; \lambda \in \mathbb{R}_+^n, \; \eta \in \mathbb{R} \label{eq:last_line}.
\end{alignat}
\end{thm}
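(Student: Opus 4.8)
The plan is to evaluate the inner worst-case expectation in \ref{eq:robust_planning} for each fixed $b$ by Lagrangian duality over probability distributions, and then to merge the resulting maximization over the dual variables with the outer maximization over $b$ into a single maximization. Concavity of $J$ plays no role in the equivalence itself; the inner problem is linear in $F$ for every $b$, so the whole argument is linear-programming duality applied pointwise in $b$ and then combined.

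First I would fix $b \in \mathcal{B}$ and view the inner problem $\inf_{F \in \mathcal{F}(\epsilon)} \mathbb{E}_{x \sim F} J(x,b)$ as an infinite-dimensional linear program over the cone of nonnegative measures on $\mathcal{X}$: minimize $\int_x J(x,b)\,dF(x)$ subject to the forecast inequalities $\int_x g_i(x)\,dF(x) \le \epsilon_i$, $i=1,\ldots,n$, and the normalization $\int_x dF(x) = 1$. Assigning a multiplier $\lambda_i \ge 0$ to the $i$-th forecast constraint and a free multiplier $\eta \in \mathbb{R}$ to the equality $\int_x dF(x) - 1 = 0$, the Lagrangian is
\begin{equation*}
	L(F,\lambda,\eta) = \int_x \Big( J(x,b) + \sum_{i=1}^n \lambda_i g_i(x) + \eta \Big)\, dF(x) \; - \; \sum_{i=1}^n \lambda_i \epsilon_i \; - \; \eta .
\end{equation*}
Minimizing $L$ over all nonnegative measures $F$ on $\mathcal{X}$ gives $-\infty$ unless the integrand $J(x,b) + \sum_i \lambda_i g_i(x) + \eta$ is nonnegative for every $x \in \mathcal{X}$, in which case the infimum is attained at the zero measure and equals $-\sum_i \lambda_i \epsilon_i - \eta$. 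Hence the Lagrange dual of the inner problem is exactly \ref{eq:semi_infinite_program}--\ref{eq:last_line} with $b$ held fixed, in particular producing the semi-infinite family of constraints \ref{eq:constraints_forall_x}.

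Second I would invoke the strong duality theorem for linear programs over probability distributions of~\cite{LP_over_distributions}: Assumption \ref{as:strict_inequality} supplies the required Slater-type constraint qualification — a distribution strictly satisfying all forecasts — which, together with the continuity of $J(\cdot,b)$ and the measurability/boundedness of the $g_i$ on $\mathcal{X}$, yields both a zero duality gap and attainment of the dual optimum. Consequently, for every $b \in \mathcal{B}$, the inner infimum equals the maximum of $-\sum_i \lambda_i \epsilon_i - \eta$ over $(\lambda,\eta) \in \mathbb{R}_+^n \times \mathbb{R}$ subject to \ref{eq:constraints_forall_x}. Substituting this identity into the outer maximization in \ref{eq:robust_planning} and collapsing the two nested maximizations into one joint maximization over $(b,\lambda,\eta)$ produces exactly \ref{eq:semi_infinite_program}--\ref{eq:last_line}; the two programs share the optimal value $P^*(\epsilon)$, and the optimal choices of $b$ coincide, so $b^*(\epsilon)$ is recovered from the reformulation.

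The crux is the infinite-dimensional strong-duality step: one must check that the hypotheses of the cited result genuinely apply here — that Assumption \ref{as:strict_inequality} is the correct constraint qualification, and that whatever additional regularity the result requires (e.g.\ compactness of $\mathcal{X}$, or uniform boundedness of $J(\cdot,b)$ and the $g_i$) is in force — and, most importantly, that the dual optimum is \emph{attained}. Attainment is what lets us write an inner $\max$ rather than merely a $\sup$, and it is precisely this that permits fusing the inner and outer maximizations into the single-layer problem \ref{eq:semi_infinite_program}--\ref{eq:last_line}; with only a supremum available the equivalence of optimal values would still hold but the merging of the two optimization layers would need extra care. As a byproduct, since the left-hand side of \ref{eq:constraints_forall_x} is concave in $b$ for each fixed $x$ and affine in $(\lambda,\eta)$, the feasible set of the reformulation is convex, which underlies the convexity claimed in the surrounding text.
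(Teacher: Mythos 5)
Your route is the same as the paper's: fix $b$, treat the inner infimum in \ref{eq:robust_planning} as a linear program over nonnegative measures, compute its Lagrange dual (which yields exactly the semi-infinite constraint family \ref{eq:constraints_forall_x} and the objective of \ref{eq:semi_infinite_program}), invoke strong duality from~\cite{LP_over_distributions}, and then merge the inner dual maximization with the outer maximization over $b$. The Lagrangian computation you give matches the paper's.

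The gap is the strong-duality step, which you yourself flag as the crux but do not close. Assumption \ref{as:strict_inequality} is not, by itself, a Slater condition for the measure LP: the problem contains the equality constraint $\int_x d\mu(x) = 1$, which can never be satisfied strictly, and the hypothesis of the cited result (\cite[Prop.~3.4]{LP_over_distributions}) is a perturbation/interiority condition on the entire right-hand-side vector $(\epsilon_1,\ldots,\epsilon_n,1)$ --- namely that the LP remains feasible for every right-hand side in a neighborhood of that point. The paper isolates exactly this verification as Claim~\ref{prop:conic} and proves it constructively in the appendix, by scaling a strictly feasible distribution $\bar\mu$ to $\mu = \epsilon'_{n+1}\,\bar\mu$ and choosing the ball radius $\delta$ in \ref{eq:delta_choice} so that the scaled inequality constraints still hold; without this (or an equivalent argument) your sentence ``Assumption~\ref{as:strict_inequality} supplies the required constraint qualification'' is an assertion, not a proof. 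A second, minor point: dual attainment is not what licenses fusing the two layers, since $\sup_{b}\sup_{\lambda,\eta} = \sup_{b,\lambda,\eta}$ holds unconditionally, so no ``extra care'' is needed there for the equality of optimal values; attainment matters instead for the existence of the optimal multipliers $\lambda^*(\epsilon)$ used in Theorem~\ref{thm:sensitivity}, and it does follow from the cited result once the perturbation condition of Claim~\ref{prop:conic} is in place.
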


\begin{proof}
The proof relies on the strong duality results in~\cite{LP_over_distributions}. First consider, for any value of $b$, the inner minimization in \ref{eq:robust_planning}, in which the optimization variable is a probability distribution $F$ over the unit interval $\mathcal{X}$. Formally this optimization variable can be expressed as a signed measure $\mu$ on the space $\mathcal{X}$ equipped with the standard Borel $\sigma$-algebra. To be a probability measure, $\mu$ is required to be positive, denoted by $\mu \geq 0$, meaning that $\mu$ assigns a positive mass on any subset of $\mathcal{X}$ that belongs in the standard Borel $\sigma$-algebra, and also to have a total mass equal to $1$, $\int_x \, d\mu(x) = 1$. Under this change of variables from probability distributions $F$ to positive measures $\mu$, and by the form of the set $\mathcal{F}$ in \ref{eq:set_F_defn}, the inner minimization in \ref{eq:robust_planning} can be equivalently written as
\begin{alignat}{2}
	P(b) = \; & \underset{ \mu \geq 0 }{\text{minimize}} \quad && \int_x \, J(x,b) \, d\mu(x) \label{eq:measure_opt}\\
	& \text{subject to} \quad  
	&& \int_x \, g_i(x) \, d\mu(x) \leq \epsilon_i, \quad i=1, \ldots, n, \label{eq:conic_ineq}\\
	& \quad && \int_x \, d\mu(x) = 1. \label{eq:conic_eq}
\end{alignat}
The robust optimization problem \ref{eq:robust_planning} is equivalent to $P^*(\epsilon) = \sup_{b \in \mathcal{B}}\, P(b)$.

The problem \ref{eq:measure_opt}-\ref{eq:conic_eq} is a linear program over measures~\cite{LP_over_distributions}. It has $n+1$ constraints, stating that the $(n+1)$-dimensional vector $\int_x [g_1(x), \ldots, g_n(x), 1]^T d\mu(x)$ lies in a cone with a vertex at point $(\epsilon_1, \ldots, \epsilon_n, 1) \in \mathbb{R}^{n+1}$. We now claim that if we perturb this vertex point to be anywhere in an open ball around the point $(\epsilon_1, \ldots, \epsilon_n, 1)$, the resulting optimization problem of the form \ref{eq:measure_opt}-\ref{eq:conic_eq} is feasible.

\begin{claim}\label{prop:conic}
Let Assumption \ref{as:strict_inequality} hold. Then we can find an open ball $B \subseteq \mathbb{R}^{n+1}$ around the point $(\epsilon_1, \ldots, \epsilon_n, 1)$  such that for any $\epsilon' \in B$ the set of signed measures
\begin{alignat}{2}\label{eq:feasible_measures}
	\Big\{ \mu \geq 0: \, \begin{array}{l}
	\int_x \, g_i(x) \, d\mu(x) \leq \epsilon'_i, \, i=1, \ldots, n, \\
	\int_x \, d\mu(x) = \epsilon'_{n+1} 
	\end{array}
	\Big\}.
\end{alignat}
is non-empty.
\end{claim}

This proof of this claim is included in Appendix~\ref{sec:appendix_proof} and is a consequence of Assumption \ref{as:strict_inequality}.

We then derive the Lagrange dual problem of \ref{eq:measure_opt}-\ref{eq:conic_eq} by defining multipliers (dual variables) $\lambda_i \geq 0$ for each one of the inequality constraints \ref{eq:conic_ineq}, $i=1,\ldots,n$, as well as a multiplier $\eta$ for the equality constraint \ref{eq:conic_eq}. Since every finite subset of the space $\mathcal{X}$ belongs in the standard Borel $\sigma$-algebra, and all Dirac probability measures $\delta(x)$ at points $x \in \mathcal{X}$ are candidates for positive measures $\mu \geq 0$ in problem \ref{eq:measure_opt}, it can be shown -- see~\cite[p.11]{LP_over_distributions} -- that the Lagrange dual problem becomes
\begin{alignat}{2}
	D(b) = \;  &\underset{ \lambda \geq 0, \eta }{\text{maximize}} \quad && -\sum_{i=1}^n \lambda_i \epsilon_i - \eta  \label{eq:dual_problem}\\
	& \text{subject to} \quad  
	&& J(x, b) + \sum_{i=1}^n \lambda_i g_i(x) +  \eta \geq 0 , \notag\\
	&&&\quad \forall x \in \mathcal{X} .\label{eq:dual_constraint}
\end{alignat}

Then, \cite[Prop. 3.4]{LP_over_distributions} shows that under the result of Prop.~\ref{prop:conic} the optimal values of the minimization in \ref{eq:measure_opt} and the maximization in \ref{eq:dual_problem} are equal, i.e., $P(b) = D(b)$. This holds for any variable $b \in \mathcal{B}$. Hence, returning to the original problem \ref{eq:robust_planning} and recalling that it can be written as $P^*(\epsilon) = \sup_{b \in \mathcal{B}}\, P(b)$, we also have that $P^*(\epsilon) = \sup_{b \in \mathcal{B}}\, D(b)$. In other words we can replace the minimization over distributions $F$ with the maximization over dual variables $\lambda,\eta$ in \ref{eq:dual_problem}-\ref{eq:dual_constraint}. The resulting problem is a maximization over variables $b, \lambda, \eta$ and corresponds exactly to problem \ref{eq:semi_infinite_program}.
\end{proof}

According to the theorem, the optimal choice $b^*(\epsilon)$ of the robust planning problem \ref{eq:robust_planning} can be equivalently found by the optimization problem \ref{eq:semi_infinite_program}-\ref{eq:last_line}, and the optimal values of the two problems are equal. The advantage of the representation in \ref{eq:semi_infinite_program}-\ref{eq:last_line} is that it bypasses the max-min structure of \ref{eq:robust_planning}. There is a finite number of optimization variables in \ref{eq:semi_infinite_program}-\ref{eq:last_line}, i.e., the decision $b$, as well as the dual variables $\eta \in \mathbb{R}$ and the vector $\lambda \in \mathbb{R}^n_+$. The caveat, although, is that the number of constraints is infinite and uncountable. Such optimization problems are called semi-infinite -- see~\cite{semi_infinite_prog} for an overview. Nevertheless it is a convex optimization problem since the objective is linear and the constraint \ref{eq:constraints_forall_x} for each value of $x$ defines a convex set due to the concavity of function $J(x,b)$ in variable $b$ for any value $x$.

General approaches for solving semi-infinite programs can be found in~\cite{semi_infinite_prog}. We examine however next the special case in which the forecasts \ref{eq:forecast_inequality} are given in the form of prediction intervals, described in Example~\ref{example:prediction_intervals}. In that case it turns out that the number of constraints in the equivalent problem \ref{eq:semi_infinite_program}-\ref{eq:last_line} can be reduced to a finite number. Hence we can pose the robust planning decision under prediction interval forecasts as a standard finite-dimensional convex optimization problem, for which efficient algorithms exist.

After this special case, we show in the following section that the optimal values of the additional variables $\lambda$ in the optimization problem \ref{eq:semi_infinite_program}-\ref{eq:last_line} can be interpreted as indicators of how valuable are the given forecasts in determining the optimal planning. Based on this interpretation, we also examine in the following section how updates on the forecasts can yield a higher planning objective value.

\subsection{Robust planning under prediction intervals}\label{sec:planning_prediction_intervals}

Consider forecasts given in the form of prediction intervals described in \ref{eq:prediction_intervals}. By their reformulation into the generic form of forecasts presented in \ref{eq:standard_prediction_intervals}, we can pose the robust planning decision problem \ref{eq:robust_planning} into its equivalent form provided by Theorem~\ref{thm:dual} in \ref{eq:semi_infinite_program}-\ref{eq:last_line}. In particular, introducing dual variables $\bar{\lambda}_1, \ldots, \bar{\lambda}_m$ for the upper bound constraints in \ref{eq:standard_prediction_intervals}, and $\underline{\lambda}_{1},\ldots,\underline{\lambda}_{m}$ similarly for the lower bounds, we have that robust planning follows from the solution to the problem
\begin{alignat}{2}
	& \underset{b \in \mathcal{B} , \, \lambda \geq 0, \, \eta }{\text{maximize}} \quad 
	&& \sum_{i=1}^m \left( \underline{\lambda}_{i} \, \underline{\delta}_i - \bar{\lambda}_i \, \bar{\delta}_i  \right) - \eta  \label{eq:robust_planning_intervals}\\
	& \text{subject to} \quad  
	&&  J(x, b) + \sum_{i=1}^m (\bar{\lambda}_i - \underline{\lambda}_{i}) \, \mathds{1}\{[ x_{i-1}, x_{i}) \} \notag\\
	&&& \quad+  \eta \geq 0, \quad \text{for all} \; x \in [0,1] \label{eq:constraint}.
\end{alignat}

We note that the continuum of constraints \ref{eq:constraint} can be equivalently separated into the given intervals 
\begin{equation}\label{eq:constraint_2}
	 J(x, b) + (\bar{\lambda}_i - \underline{\lambda}_{i}) +  \eta \geq 0, \quad \text{for all} \; x \in [ x_{i-1}, x_{i}), 
\end{equation}
for all $i=1,\ldots,m$. Note also that by the assumption that the function $J(x,b)$ is concave in $x$ for all $b$, it follows that $J(x, b) \geq \min\{J(x_{i-1}, b), J(x_i, b)\}$, where the inequality is tight by continuity of function $J(x,b)$. Hence instead of checking \ref{eq:constraint_2} for a continuum of values $x \in [ x_{i-1}, x_{i})$ for each $i=1, \ldots, m$, we only need to check at the two endpoints $x_{i-1}$ and $x_i$. In other words, the problem \ref{eq:robust_planning_intervals}-\ref{eq:constraint} can be equivalently written as 
\begin{alignat}{2} \label{eq:planning_prediction_intervals}
	& \underset{ b \in \mathcal{B} , \, \lambda \geq 0, \, \eta }{\text{maximize}} \quad 
	&& \sum_{i=1}^m \left( \underline{\lambda}_{i} \, \underline{\delta}_i - \bar{\lambda}_i \, \bar{\delta}_i  \right) - \eta  \\
	& \text{subject to} \quad  
	&& J(x_{i-1}, b) + \bar{\lambda}_i - \underline{\lambda}_{i} +  \eta \geq 0 , \notag\\
	& && J(x_{i}, b) + \bar{\lambda}_i - \underline{\lambda}_{i} +  \eta \geq 0, \notag\\
	&&& \quad \quad \text{for all} \; i=1,\ldots,m. \label{eq:constraint_pred_int}
\end{alignat}
This optimization problem is convex and has a finite number of constraints. The number of constraints is twice the number of initial forecast intervals because essentially each forecast of the robust optimization is converted into two constraints, one for each of the two interval endpoints. The solution of \ref{eq:planning_prediction_intervals} can be determined efficiently by standard convex optimization algorithms.

\section{Relative value of forecasts}\label{sec:forecast_updates}

In this section we aim to quantify the value of the given forecasts \ref{eq:forecast_inequality} to the planner who needs to solve the robust planning problem \ref{eq:robust_planning}. In particular, \ref{eq:forecast_inequality} defines a set of $n$ given forecasts and we are interested in determining how valuable information does each one of them provide to the planner. To this end, we examine how sensitive the robustly optimal objective $\mathcal{P}^*(\epsilon)$, defined in \ref{eq:robust_planning}, is to the given forecast values $\epsilon$, i.e., what would the objective value become for deviations from the given parameters $\epsilon$. 

The forecast parameters $\epsilon$ determine the objective $P^*(\epsilon)$ defined in \ref{eq:robust_planning} via the set of distributions $\mathcal{F}(\epsilon)$ appearing in the constraint of the inner minimization. To examine the sensitivity of $P^*(\epsilon)$ when the given forecast parameters $\epsilon$ change to some new values $\epsilon - \Delta \epsilon$ for some $\Delta \epsilon \in \mathbb{R}^n$, with the negative sign chosen for convention, we leverage the problem reformulation provided by Theorem \ref{thm:dual}. We exploit the well-known convex optimization fact that in general the optimal values of the Lagrange dual variables $\lambda$ express the sensitivity of the objective of a convex optimization problem with respect to changes in the constraints - see~\cite[Ch. 5.6]{Boyd_convex}. In our case, the robust planning \ref{eq:robust_planning} involves a two-layer (max-min) optimization objective, but a similar sensitivity result can be obtained. We state this result in the following theorem.


\begin{thm}\label{thm:sensitivity}
Under Assumption \ref{as:strict_inequality}, for any $\Delta \epsilon \in \mathbb{R}^n$ it holds that
\begin{equation}\label{eq:utility_improve}
	P^*(\epsilon - \Delta \epsilon) \geq P^*(\epsilon) + \sum_{i=1}^n \lambda_i^*(\epsilon) \, \Delta \epsilon_i ,
\end{equation}
where $\lambda_i^*(\epsilon)$, for $i=1,\ldots,n$, is the optimal solution to problem \ref{eq:semi_infinite_program}.
\end{thm}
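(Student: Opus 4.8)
The plan is to leverage the reformulation from Theorem~\ref{thm:dual}, which expresses $P^*(\epsilon)$ as the optimal value of the maximization problem \ref{eq:semi_infinite_program}--\ref{eq:last_line}. The key observation is that in that problem the forecast parameters $\epsilon_i$ enter \emph{only} through the linear objective term $-\sum_i \lambda_i \epsilon_i$, and not through the constraints \ref{eq:constraints_forall_x}--\ref{eq:last_line}. Therefore the feasible set of $(b,\lambda,\eta)$ is exactly the same for the perturbed problem with parameter $\epsilon - \Delta\epsilon$ as for the original problem with parameter $\epsilon$. This is what makes the sensitivity bound clean: we do not need a full duality or subgradient argument, only a feasibility comparison.

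The steps I would carry out are as follows. First, invoke Theorem~\ref{thm:dual} to write
\begin{equation*}
	P^*(\epsilon - \Delta\epsilon) = \sup_{(b,\lambda,\eta) \in \mathcal{C}} \Big( -\sum_{i=1}^n \lambda_i (\epsilon_i - \Delta\epsilon_i) - \eta \Big),
\end{equation*}
where $\mathcal{C}$ denotes the feasible set defined by \ref{eq:constraints_forall_x} and \ref{eq:last_line}, which is independent of the forecast parameters. Second, note that the optimal solution $(b^*(\epsilon), \lambda^*(\epsilon), \eta^*(\epsilon))$ of the unperturbed problem lies in $\mathcal{C}$ and is therefore a feasible (not necessarily optimal) point for the perturbed problem. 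Third, evaluate the perturbed objective at this point:
\begin{equation*}
	-\sum_{i=1}^n \lambda_i^*(\epsilon)(\epsilon_i - \Delta\epsilon_i) - \eta^*(\epsilon) = \Big( -\sum_{i=1}^n \lambda_i^*(\epsilon)\epsilon_i - \eta^*(\epsilon) \Big) + \sum_{i=1}^n \lambda_i^*(\epsilon)\Delta\epsilon_i = P^*(\epsilon) + \sum_{i=1}^n \lambda_i^*(\epsilon)\Delta\epsilon_i,
\end{equation*}
using Theorem~\ref{thm:dual} again to identify the first bracket with $P^*(\epsilon)$. Since $P^*(\epsilon - \Delta\epsilon)$ is the supremum over all feasible points and this particular feasible point achieves the displayed value, the inequality \ref{eq:utility_improve} follows immediately. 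One should also check that Assumption~\ref{as:strict_inequality} continues to guarantee applicability of Theorem~\ref{thm:dual} to the perturbed problem, or alternatively phrase the bound purely in terms of the optimization problem \ref{eq:semi_infinite_program} so that strict feasibility is only needed for the unperturbed instance where it is assumed.

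The main subtlety — rather than a true obstacle — is the question of whether the supremum in the perturbed problem is actually attained and whether Theorem~\ref{thm:dual}'s equivalence $P^*(\cdot)$ holds for the parameter $\epsilon - \Delta\epsilon$, since Assumption~\ref{as:strict_inequality} is stated only for $\epsilon$ and arbitrary $\Delta\epsilon$ could render $\mathcal{F}(\epsilon - \Delta\epsilon)$ empty or only weakly feasible. This is handled by observing that the argument only ever uses the \emph{weak-duality-type} direction: $P^*(\epsilon - \Delta\epsilon)$ is at least the value of the maximization problem \ref{eq:semi_infinite_program} with parameter $\epsilon - \Delta\epsilon$ (which always holds, as the dual of the inner minimization provides a lower bound even without strong duality), and the latter is at least the objective evaluated at the fixed feasible point $(b^*(\epsilon), \lambda^*(\epsilon), \eta^*(\epsilon))$. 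So no strong duality or attainment is needed on the perturbed side; only the identification of $P^*(\epsilon)$ with the unperturbed optimal value, which is exactly Theorem~\ref{thm:dual} under Assumption~\ref{as:strict_inequality}. With this framing the proof is essentially a two-line feasibility-plus-evaluation argument.
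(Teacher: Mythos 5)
Your proposal is correct and takes essentially the same route as the paper: the paper also fixes the unperturbed optimum $(b^*(\epsilon),\lambda^*(\epsilon),\eta^*(\epsilon))$, uses only weak duality on the perturbed inner minimization (noting in a footnote that equality may fail for $\Delta\epsilon \neq 0$), exploits that the dual feasibility constraints do not involve $\epsilon$, and invokes Theorem~\ref{thm:dual} solely at $\epsilon$ to identify $P^*(\epsilon)$ with $-\sum_i \lambda_i^*(\epsilon)\epsilon_i - \eta^*(\epsilon)$. The only cosmetic difference is ordering: the paper first restricts the outer supremum to $b^*(\epsilon)$ and then dualizes the inner problem, whereas you dualize for every $b$ and then evaluate at the full feasible triple, which is the same two-step feasibility-plus-weak-duality argument.
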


\begin{proof}
Along with $\lambda_i^*(\epsilon)$ and already defined $b^*(\epsilon)$, let $\eta^*(\epsilon)$ be a corresponding optimal solution to problem \ref{eq:semi_infinite_program}-\ref{eq:last_line}. Under Assumption \ref{as:strict_inequality} by Theorem \ref{thm:dual}, i.e., the equivalence of \ref{eq:robust_planning} and \ref{eq:semi_infinite_program}-\ref{eq:last_line}, we have for the optimal objective that 
\begin{equation}\label{eq:dual_optimality}
	P^*(\epsilon) = - \sum_{i=1}^n \lambda_i^*(\epsilon) \, \epsilon_i - \eta^*(\epsilon) ,
\end{equation}
as well as
\begin{equation} \label{eq:dual_feasibility}
	J(x, b^*(\epsilon)) + \sum_{i=1}^n \lambda_i^*(\epsilon) \, g_i(x) +  \eta^*(\epsilon) \geq 0,
\end{equation}
for all $x \in \mathcal{X}$ by the feasibility to constraints \ref{eq:constraints_forall_x}.

Fix any $\Delta \epsilon \in \mathbb{R}^n$ and consider $P^*(\epsilon - \Delta \epsilon)$ defined by \ref{eq:robust_planning}. By definition we have that
\begin{alignat}{1}
	P^*(\epsilon - \Delta \epsilon)
	&= \sup_{b \in \mathcal{B}}\; \inf_{F \in \mathcal{F}(\epsilon - \Delta \epsilon)} \; \mathbb{E}_{x \sim F} \, J(x,b) \notag\\
	&\geq \inf_{F \in \mathcal{F}(\epsilon - \Delta \epsilon)} \; \mathbb{E}_{x \sim F} \, J(x,b^*(\epsilon)) ,
	\label{eq:rhs}
\end{alignat}
where the last inequality follows because $b^*(\epsilon)$ is in general a suboptimal choice for $b \in \mathcal{B}$. Following arguments similar to those in the proof of Theorem \ref{thm:dual} we can show that the optimal value of the minimization on the right hand side of \ref{eq:rhs} is lower bounded by the optimal value of its Lagrange dual maximization problem\footnote{In fact in Theorem \ref{thm:dual} we show that under Assumption \ref{as:strict_inequality} the two optimal values are equal for $\Delta \epsilon = 0$, but here for $\Delta \epsilon \neq 0$ the equality might not hold.}. Replacing this lower bound in \ref{eq:rhs} we have that 
\begin{alignat}{2}
	P^*(\epsilon - \Delta \epsilon) 
	\geq \; &\underset{ \lambda \geq 0 , \eta }{\text{maximize}} \quad && - \sum_{i=1}^n \lambda_i \, (\epsilon_i - \Delta \epsilon_i) - \eta  \label{eq:semi_infinite_program_2}\\
	& \text{subject to} \quad  
	&& J(x, b^*(\epsilon)) + \sum_{i=1}^n \lambda_i \, g_i(x) \notag\\
	&&& \quad +  \eta \geq 0, \quad \text{for all} \; x \in \mathcal{X}. \label{eq:constraints_forall_x_2}
\end{alignat}
Note now that $\lambda^*(\epsilon)$ and $\eta^*(\epsilon)$ are feasible solutions for the last optimization problem as follows by \ref{eq:dual_feasibility}, hence we have that
\begin{alignat}{1}
	P^*(\epsilon - \Delta \epsilon)
	&\geq  - \sum_{i=1}^n \lambda_i^*(\epsilon) \, (\epsilon_i - \Delta \epsilon_i) - \eta^*(\epsilon)  \notag\\
	&= P^*(\mathcal{F}(\epsilon)) + \sum_{i=1}^n \lambda_i^*(\epsilon) \, \Delta \epsilon_i
\end{alignat}
where the last equality follows by \ref{eq:dual_optimality}. Consequently the statement of the theorem holds.
\end{proof}

The theorem provides a lower bound on the optimal planning objective value of \ref{eq:robust_planning} after changes in the forecast bounds $\epsilon$ provided in \ref{eq:forecast_inequality}. The change in the lower bound compared to $P^*(\epsilon)$ is proportional to the change from $\epsilon$ to $\epsilon - \Delta \epsilon$. The rate of change in planning objective units in each direction $i$ is given by the optimal value of the (dual) variable $\lambda_i^*(\epsilon)$ for each element $i=1,\ldots,n$. We can thus interpret the values $\lambda_i^*(\epsilon)$ as indicators of how \textit{valuable} each one of the forecasts \ref{eq:forecast_inequality} is to the planner. A forecast $i$ with a high value $\lambda_i^*(\epsilon)$ is more informative compared to other forecasts $j \neq i$ because a small change in the given forecast bound $\epsilon_i$ gives a significant change in the (lower bound of the) objective. We also note that the values $\lambda_i^*(\epsilon)$ have already been obtained during the computation of the robust planning by \ref{eq:semi_infinite_program}, hence no further computation is required to determine them.

\begin{remark}
The sensitivity analysis provided by Theorem~\ref{thm:sensitivity} is performed with respect to the given forecast parameters $\epsilon$, i.e., they are obtained locally for $\epsilon$. At some other forecast parameter $\epsilon'$, the sensitivities, i.e., the values $\lambda_i^*(\epsilon')$ will differ. This difference matches the intuition that different forecasts provide different information. On the other hand the theorem provides a lower bound on the objective value at any deviation $\epsilon - \Delta \epsilon \in \mathbb{R}^n$, not just locally around $\epsilon$. It is also worth noting that the theorem does not provide any guarantee on how far the lower bound is with respect to the actual objective for such deviations. 
\end{remark}

Given the sensitivity-based characterization of how informative some given forecasts are, we next propose a method for refining/updating the forecasts in a way that increases the utility of the decision-making problem.

\subsection{Forecast refinements}\label{sec:forecast_refinement}

Suppose the decision-maker has access to a forecasting oracle, which upon request can refine one of the $n$ given forecasts in \ref{eq:forecast_inequality} -- see Remark~\ref{rem:oracle}. Inspecting the particular type of forecasts we consider in \ref{eq:forecast_inequality} written as inequalities, a refined forecast of some inequality $i$ can be represented by decreasing the bound $\epsilon_i$ to a lower value $\epsilon_i - \Delta \epsilon_i$. Equivalently, such decrease has the effect of shrinking the set $\mathcal{F}(\epsilon)$ of possible probability distributions that the decision-maker has to consider, as defined in \ref{eq:set_F_defn}, to a subset $\mathcal{F}(\epsilon - \Delta \epsilon) \subseteq \mathcal{F}(\epsilon)$. 

A question that arises in this context is how the decision-maker should select which of the $n$ forecasts of \ref{eq:robust_planning} to refine. This question is particularly important if the cost of refining forecasts is high, e.g., the computational cost of running extra simulations of a forecasting algorithm. The characterization of Theorem~\ref{thm:sensitivity} suggests that the forecast that provides the most valuable information should be selected, i.e., the forecast $i$ with the highest value $\lambda_i^*(\epsilon)$. Based on this intuition we propose the iterative forecast refinement procedure shown in Algorithm~\ref{alg:update}. On every iteration sensitivity values are obtained by solving the planning problem, and a refinement for the forecast bound with the highest sensitivity is requested (e.g. from the oracle) while all other bounds are kept constant. The procedure terminates, for example, if no further refinements are possible, or if the objective value improvements become insignificant.


\begin{algorithm}[!t]
\caption{Sensitivity-driven forecast refinement}\label{alg:update}
\begin{algorithmic}[1]

\Require Utility function $J(x,b)$, forecast functions $g_i(x)$ for $i=1,\ldots,n$, initial forecast parameters $\epsilon(0) \in \mathbb{R}^n$
\Ensure Robustly optimal planning decision $b$
\State $k \gets 0$

\Repeat

\State Solve robust planning \ref{eq:semi_infinite_program}-\ref{eq:last_line} with forecast parameters $\epsilon(k)$, and determine optimal planning $b^*(\epsilon(k))$ and sensitivity values $\lambda^*(\epsilon(k)) \in \mathbb{R}^n$

\State Select $j = \text{argmax}_{1\leq i \leq n} \; \lambda_i^*(\epsilon(k))$

\State Request refinement in $j$th forecast $\epsilon_j(k+1) \leq \epsilon_j(k)$

\State Keep $\epsilon_i(k+1) = \epsilon_i(k)$ for all $i \neq j$

\State $k \gets k+1$
\Until{Termination condition}

\State \Return Final planning decision $b^*(\epsilon(k))$

\end{algorithmic}
\end{algorithm}

Even though this procedure is motivated mainly by intuition, without theoretical claims in terms of optimality, convergence, etc., we note the following fact. On each iteration, according to \ref{eq:utility_improve} of Theorem~\ref{thm:sensitivity}, the robustly optimal utility value increases by at least an amount equal to $\lambda_j^*(\epsilon(k)) \, (\epsilon_j(k+1) - \epsilon_j(k))$. This minimum increase is proportional to the amount of change in the selected forecast parameter. If the selected forecast $j$ cannot be refined, i.e., $\epsilon_j(k+1) = \epsilon_j(k)$, then no improvement in decision-making is achieved. In general, however, the result matches the intuition that with a better forecast, i.e., imposing a more constraining set $\mathcal{F}(\epsilon - \Delta \epsilon)$ in the minimization of \ref{eq:robust_planning}, the objective value of the planning can only improve, i.e., the optimal value of \ref{eq:robust_planning} cannot decrease.

In the following section we present a numerical example for a wind power plant participating in an electricity market (Example~\ref{example:bidding}) under prediction interval forecasts (Example~\ref{example:prediction_intervals}). We demonstrate the methodology derived by Theorem~\ref{thm:dual} for determining the robustly optimal bidding decision. Additionally we perform the sensitivity analysis developed in this section and we implement the sensitivity-driven procedure for refining forecasts. 

\begin{remark}\label{rem:oracle}
The methodology for refining forecasts in this section is developed without considering a specific forecasting procedure. If, for example, an oracle obtains the forecasts by sampling from the true distribution of the unknown quantity, through simulations of an underlying stochastic model, then forecasts can be refined by drawing extra samples. However, a large number of extra samples might be required to improve upon the estimates of all the integrals in \ref{eq:forecast_inequality}, e.g., in a Monte Carlo fashion. By focusing on only one of the $n$ integrals during refinement, as the oracle does in our hypothesis, the number of required samples can be reduced. This can be performed by variance reduction techniques, such as importance sampling~\cite{Ripley_simulation}, which aim at sampling more often from values important in estimating the selected integral.
\end{remark}

\section{Application: Bidding in electricity markets under prediction intervals}\label{sec:numerical}

\begin{figure}[!t]
  \centering
  \includegraphics[width=1.0\columnwidth]{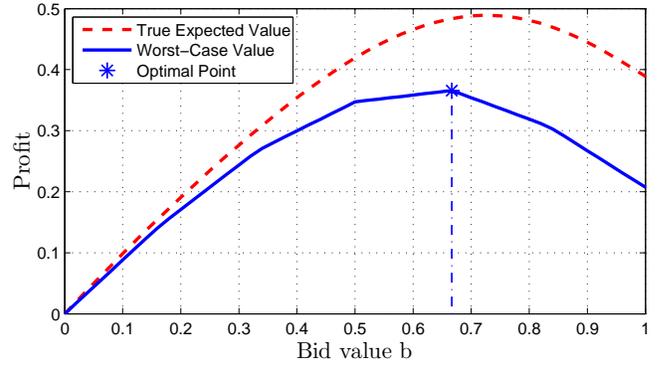}
  \caption{Expected profit and worst-case expected profit as a function of the chosen bidding for the case considered in the numerical example. The worst-case value is always an under-approximation of the true one. The robustly optimal bidding balances between the shortfall penalties and gains from a high bid.}
  \label{fig:profit}
\end{figure}

In this section we apply our theoretical results in a numerical example of bidding renewable energy in electricity markets. This problem was introduced in Example~\ref{example:bidding}, with the utility function representing the profit for the renewable generator and repeated here for convenience,
\begin{equation}\label{eq:J_bidding_repeat}
	J(x,b) = p\,b - q [b-x]_+ .
\end{equation}
The utility involves a reward term proportional to the bid $b$ with a rate $p>0$, and a penalty proportional to the generation shortfall $[b-x]_+$ with a rate $q > p$. Here both $x \in [0,1]$ and $b \in [0,1]$. Suppose also the generator obtains forecasts about the future generation value $x$ in the form of prediction intervals, introduced in Example~\ref{example:prediction_intervals}. We consider the problem of determining the value of bidding that robustly maximizes the expected profit to the generator subject to the given forecasts about the renewable generation. This problem is mathematically described by the robust planning formulation \ref{eq:robust_planning}. To solve this problem we adopt the reformulation into a convex optimization problem which was presented in Section \ref{sec:planning_prediction_intervals}, and in particular takes the form \ref{eq:planning_prediction_intervals}-\ref{eq:constraint_pred_int}.

In our numerical example, we consider forecasts for the probability that the random generation $x$ takes values in each of $m$ equal intervals that partition the total range of values $[0,1]$, i.e., $x_i = i/m$ for $i=0,\ldots,m$ -- see also Example~\ref{example:prediction_intervals}. To derive the lower and upper bounds, $\underline{\delta}_i$ and $\bar{\delta}_i$ respectively according to \ref{eq:prediction_intervals}, we adopt the following procedure. We fix some true underlying probability distribution of the generation $x$ and we produce bounds $\underline{\delta}_i, \bar{\delta}_i$ by perturbing the true value $\mathbb{P}(x_{i-1} \leq x < x_{i} )$ computed with the underlying distribution. Specifically we consider perturbation by a constant amount below and above $\mathbb{P}(x_{i-1} \leq x < x_{i} )$ for each $i=1,\ldots, m$. Then only the resulting lower and upper bounds, $\underline{\delta}_i$ and $\bar{\delta}_i$ respectively, are available to the generator, not the underlying probability distribution used to create them.


Given $m=6$ prediction intervals, shown in Fig.~\ref{fig:prediction_intervals}, we first solve the robust bidding problem, i.e., the convex optimization \ref{eq:planning_prediction_intervals}-\ref{eq:constraint_pred_int}, for reward and penalty values $p=1, q = 1.6$ in the profit function \ref{eq:J_bidding_repeat}. We obtain an optimal bidding value $b^*$ roughly equal to $0.67$. In Fig.~\ref{fig:profit} we illustrate the value of the worst-case expected profit, i.e., the objective in the planning problem \ref{eq:robust_planning}, for all bid values $b$ as well as the optimal point. For comparison we also illustrate the expected profit computed using the underlying true probability distribution of $x$ for all bid values $b$. The worst-case expected profit is an under-approximation of the true expected value, since the forecasts provide incomplete information about the underlying distribution. The optimal choice $b^*$ balances between the two extremes $0$ and $1$, as expected by intuition, with $b = 0$ being the most ''secure'' bid but yielding a zero expected profit, and $b=1$ being the most ''risky'' bid since it is certain that a shortfall $x<b=1$ will occur.

\begin{table}[!t]
\centering                                                                                                                 
\begin{tabular}{|c|c|c|c|c|c|c|}                                                                                           
\hline 
&&&&&&\\[-5pt]
& $\bar{\lambda}_1$ & $\bar{\lambda}_2$ & $\bar{\lambda}_3$ 
& $\bar{\lambda}_4$ & $\bar{\lambda}_5$ & $\bar{\lambda}_6$ \\
[5pt]
\hline                                                                                                                     
Iter 1 & 0.66 & 0.39 & 0.12 & 0.00 & 0.00 & 0.00 \\                                                                        
\hline                                                                                                                     
Iter 2 & 0.80 & 0.53 & 0.27 & 0.00 & 0.00 & 0.00 \\                                                                        
\hline                                                                                                                     
Iter 3 & 0.80 & 0.53 & 0.27 & 0.00 & 0.00 & 0.00 \\                                                                        
\hline                                                                                                                     
Iter 4 & 1.07 & 0.80 & 0.53 & 0.27 & 0.00 & 0.00 \\                                                                        
\hline                                                                                                                     
\end{tabular}                                                                                                              
\caption{Sensitivities to upper forecast bounds}
\label{tab:lambda_upper}                                                                                                 
\end{table}

\begin{table}[!t]
\centering                                                                                                                 
\begin{tabular}{|c|c|c|c|c|c|c|}                                                                                           
\hline 
&&&&&&\\[-5pt]   
 & $\underline{\lambda}_1$ & $\underline{\lambda}_2$ & $\underline{\lambda}_3$ & $\underline{\lambda}_4$ & $\underline{\lambda}_5$ & $\underline{\lambda}_6$ \\
[5pt]
\hline                
    Iter 1 & 0.00 & 0.00 & 0.00 & 0.14 & 0.41 & 0.41 \\
    \hline
    Iter 2 & 0.00 & 0.00 & 0.00 & 0.00 & 0.27 & 0.27 \\
    \hline
    Iter 3 & 0.00 & 0.00 & 0.00 & 0.00 & 0.27 & 0.27 \\
    \hline
    Iter 4 & 0.00 & 0.00 & 0.00 & 0.00 & 0.00 & 0.00 \\
\hline
\end{tabular}
\caption{Sensitivities to lower forecast bounds}
\label{tab:lambda_lower}
\end{table}

Moreover, we examine how the information provided by the given forecasts of Fig.~\ref{fig:prediction_intervals} is valued by the decision maker. Following the development of Section~\ref{sec:forecast_updates}, we examine the sensitivity of the robust bidding profit to the set of given forecasts. According to Theorem~\ref{thm:sensitivity} this sensitivity is captured by the optimal values of the dual variables. In particular the sensitivity to upper and lower bounds of prediction intervals, $\bar{\delta}_i$ and $\underline{\delta}_i$ respectively, for $i=1,\ldots, m$, is captured by the dual variables $\bar{\lambda}_1, \ldots, \bar{\lambda}_m$ and $\underline{\lambda}_{1}, \ldots, \underline{\lambda}_{m}$ respectively, as introduced in Section~\ref{sec:planning_prediction_intervals}. 

The sensitivity values in our example, as computed by solving \ref{eq:planning_prediction_intervals}-\ref{eq:constraint_pred_int}, are shown in the first row of each of the Tables~\ref{tab:lambda_upper},~\ref{tab:lambda_lower}. First we observe that at each interval $i=1,\ldots,m$, at most one of the upper or lower bound sensitivities $\bar{\lambda}_i, \underline{\lambda}_i$ is non-zero. The reason is that at most one of two bounds (cf.~\ref{eq:prediction_intervals}) is tight for the worst-case distribution at the robustly optimal point of problem \ref{eq:robust_planning}. Second, from the dual values we see that the optimal profit is sensitive to the probability of both having a low generation (captured by $\bar{\lambda}_1, \bar{\lambda}_2,  \bar{\lambda}_3$) as well as having a high generation (captured by $\underline{\lambda}_4, \underline{\lambda}_5, \underline{\lambda}_6$). In other words information about these events is the most informative. However the sensitivity is higher in the low generation case because the profit is significantly affected by shortfalls in this case.


Next we adopt the sensitivity-driven methodology developed in Section~\ref{sec:forecast_updates} that uses a forecasting oracle to refine the given forecasts so that they become more informative for the robust bidding problem. Following Algorithm~\ref{alg:update} we iteratively select to refine the forecast with the highest sensitivity value, i.e., one of the upper or lower bounds $\bar{\delta}_i$ and $\underline{\delta}_i$ for an interval $i$ in \ref{eq:prediction_intervals}. If an upper bound $\bar{\delta}_i$ is selected it gets decreased, and respectively if a lower bound $\underline{\delta}_i$ is selected it gets increased. In our numerical example, the resulting new bound needs to be consistent with the true underlying distribution, so we model the following forecasting oracle. Upon request, the oracle can refine each upper or lower bound by a constant decrease or increase respectively, and the new bound is close enough to the true value so that the oracle cannot refine it any further. This model is chosen here for simplicity. In general, Algorithm~\ref{alg:update} can be applied regardless of how the forecasting oracle operates.

\begin{figure}[!t]
  \centering
  \includegraphics[width=1.0\columnwidth]{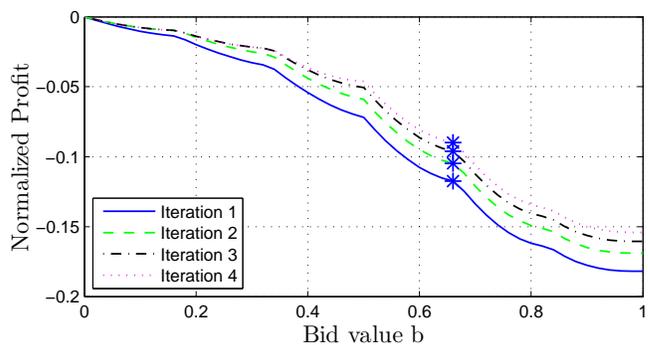}
  \caption{Normalized worst-case profit as a function of bid value after iterations of the forecast refinement algorithm. The plotted values are normalized by subtracting the true underlying expected profit. The refinements increase the worst-case profit and bring it closer to the true value. The robustly optimal biding values are also indicated.}
  \label{fig:update}
\end{figure}

We perform iterations of Algorithm~\ref{alg:update} and the corresponding sensitivity values on each iteration are shown in Tables~\ref{tab:lambda_upper},~\ref{tab:lambda_lower}. As already mentioned, $\bar{\lambda}_1$ has the higher value initially, so the upper bound on the first interval is refined (i.e., reduced). On the second iteration, after solving again the optimal planning \ref{eq:planning_prediction_intervals}-\ref{eq:constraint_pred_int}, the new sensitivity values are obtained and again $\bar{\lambda}_1$ is the largest. Since the forecasting oracle described in our example cannot reduce the corresponding bound any further, the bound with the second largest sensitivity is selected to be refined, which is $\bar{\lambda}_2$. On the third iteration, again $\bar{\lambda}_1, \bar{\lambda}_1$ are the largest, but due to the oracle model, the third largest $\bar{\lambda}_3$ is selected, and so on. It is worth noting that after some iterations we observe that the sensitivities of all lower bounds $\underline{\lambda}_i$ become zero. This means that currently they are not providing any valuable information, and refining (i.e., increasing) them does not necessarily offer an improvement in profit.

Finally, to examine the improvements in the bidding profit after these iterations of the refinement algorithm, we plot in Fig.~\ref{fig:update} the worst-case expected profit, i.e., the objective in the planning problem \ref{eq:robust_planning}, for all bid values $b$. For visualization reasons the values are normalized by subtracting the true expected profit (the latter is larger, as shown in Fig.~\ref{fig:profit}, so the plotted values are negative). We observe that as the forecasts are refined, the worst-case expected profit increases and gets closer to the actual expected profit (the baseline zero). In other words, the refined forecasts provide a more accurate model of the true profit. It is worth noting that the largest discrepancy between worst-case and true expected value happens at the high bid region, since the worst-case scenario assumes that generation takes the lowest possible values with the highest possible probability, making the associated shortfall costs large. We also note that in this example even though the optimal value of the profit increases with the forecast refinements, the optimal bidding value shown in Fig.~\ref{fig:update} is the same. This is a consequence of the piecewise linear utility function in \ref{eq:J_bidding_repeat}. For more general utility functions the optimal planning decision can change as well during the refinements, along with the optimal objective values, and our algorithm is able to track these changes.


\section{Concluding remarks and future work}\label{sec:remarks}

We examine how planning problems under uncertainty can utilize forecasts about uncertain quantities. By equivalently expressing forecasts as constraints on the possible probability distributions that the uncertain quantity can follow, the problem is cast as a robust optimization. The optimal planning seeks to maximize the expected value of a given utility function, integrated with respect to the worst-case distribution consistent with the forecasts. A reformulation into a convex optimization problem is presented, from which we can extract information about how valuable are the forecasts in determining the optimal planning decision.  

Under the hypothesis that a forecasting oracle can refine the given forecasts upon request, i.e., that the set of possible probability distributions in the robust planning can be decreased, an iterative forecast refinement procedure is proposed in Section~\ref{sec:forecast_refinement}. Even though the procedure is justified theoretically, in practice the way refinements are performed depends on the employed forecasting algorithm. For example, it might be the case that many forecasts can be refined at the same time, instead of just one at a time as we consider, or that the requested forecast cannot be refined due to limitations of the oracle and/or because the forecast is very close to the true value. Moreover, besides just refining the set of given forecasts, as we consider, the oracle might generate new additional forecasts. Overall, coupling the proposed sensitivity-driven approach with forecasting algorithms used in practice requires further exploration.

More general problem formulations include the study of uncertain quantities evolving over time in a potentially correlated fashion, e.g., the random renewable generation during a time horizon in the future. Forecasting for such models has been considered in, e.g., scenario-based forecasts~\cite{Pinson_scenario}. Furthermore, apart from uncertain quantities spread over time, the framework can be extended to include spatially distributed models as well, as in the case of correlated renewable generations at different physical locations in the grid.

\appendix
\subsection{Proof of Claim 1}\label{sec:appendix_proof}

The proposition equivalently states that we can find $\delta >0$ (the radius of ball $B$) such that for any $\epsilon' \in \mathbb{R}^{n+1}$ that satisfies $\vert \epsilon_i' - \epsilon_i \vert < \delta$, $i=1,\ldots,n$, and $ \vert \epsilon_{n+1}' - 1 \vert < \delta$, i.e., $\epsilon'$ belongs in ball $B$, the set \ref{eq:feasible_measures} is non-empty. The proof is constructive. We first construct an appropriate ball radius $\delta$, and then for any point $\epsilon' \in B$ in the ball we construct a measure $\mu$ in the set \ref{eq:feasible_measures}, i.e., satisfying
\begin{alignat}{2}
	&\int_x \, g_i(x) \, d\mu(x) \leq \epsilon_i' , \quad i=1, \ldots, n, \label{eq:perturbed_ineq}\\
	&\int_x \, d\mu(x) = \epsilon_{n+1}'. \label{eq:perturbed_eq}
\end{alignat}

First, by Assumption \ref{as:strict_inequality} we have that there exists a probability measure $\bar{\mu} \geq 0$ such that 
\begin{alignat}{2}
	&\int_x \, g_i(x) \, d\bar{\mu}(x) \leq \epsilon_i - \zeta , \quad i=1, \ldots, n, \label{eq:strict_inequality}\\
	&\int_x \, d\bar{\mu}(x) = 1,
\end{alignat}
for some $\zeta>0$. Define the ball radius to be
\begin{equation}\label{eq:delta_choice}
	\delta = \min\; \left\{ \frac{\zeta}{1 + | \epsilon_1 - \zeta|},\, \ldots,  
	\, \frac{\zeta}{1 + | \epsilon_n - \zeta|}, \, 1 \right\}. 
\end{equation}
%

Fix then any point $\epsilon' \in B$. We need to show that there exists a signed measure $\mu \geq 0$ such that \ref{eq:perturbed_ineq} and \ref{eq:perturbed_eq} hold. Consider the measure $\mu = \epsilon_{n+1}' \, \bar{\mu}$ defined by scaling the measure $\bar{\mu}$, i.e., $\mu(S) = \epsilon_{n+1}'\, \bar{\mu}(S)$ at each set $S$ in the Borel $\sigma$-algebra of the set $\mathcal{X}$. Note that this measure is positive since $\epsilon_{n+1}' > 0$, which follows from $ \vert \epsilon_{n+1}' - 1 \vert < \delta \leq 1$. Also this measure satisfies \ref{eq:perturbed_eq} since $\int_x \, d\mu(x) = \epsilon_{n+1}'\, \int_x d\bar{\mu}(x) = \epsilon_{n+1}'$.

Hence it remains to show that \ref{eq:perturbed_ineq} holds. The right hand side of \ref{eq:perturbed_ineq} satisfies $\epsilon_i' \geq \epsilon_i - \delta$. Also, the left hand side of \ref{eq:perturbed_ineq} satisfies
\begin{equation}\label{eq:temp}
	\int_x \, g_i(x) \, d{\mu}(x) = \epsilon_{n+1}'\, \int_x \, g_i(x) \, d\bar{\mu}(x) \leq \epsilon_{n+1}'\, (\epsilon_i - \zeta),
\end{equation}
because of \ref{eq:strict_inequality} and $\epsilon_{n+1}' >0$ as we argued previously. Hence to show \ref{eq:perturbed_ineq} it suffices to show that $\epsilon_{n+1}'\, (\epsilon_i - \zeta) \leq \epsilon_i - \delta$ for all $i=1,\ldots,n$.

We consider two cases for the sign of quantity $\epsilon_i - \zeta$. If $\epsilon_i - \zeta \geq 0$, and since $\epsilon_{n+1}' < 1 + \delta$, we have that $\epsilon_{n+1}' (\epsilon_i - \zeta) \leq (1 + \delta) (\epsilon_i - \zeta)$. In that case to show \ref{eq:perturbed_ineq} it suffices to show that $(1 + \delta) (\epsilon_i - \zeta) \leq \epsilon_i - \delta$. This is equivalent to $\delta ( 1+ \epsilon_i - \zeta) \leq \zeta$, easily checked by our choice of $\delta$ in \ref{eq:delta_choice}.

Similarly for the case $\epsilon_i - \zeta < 0$, since $\epsilon_{n+1}' > 1 - \delta$, we have that $\epsilon_{n+1}' (\epsilon_i - \zeta) \leq (1 - \delta) (\epsilon_i - \zeta)$. Thus for \ref{eq:perturbed_ineq} it suffices to show that $(1 - \delta) (\epsilon_i - \zeta) \leq \epsilon_i - \delta$ which can be again easily checked by our choice of $\delta$ in \ref{eq:delta_choice}.

\bibliography{biblio_energy}


\end{document}